\newtheorem{theorem}{Theorem}[section]
\newtheorem{lemma}[theorem]{Lemma}
\newtheorem{remark}[theorem]{Remark}
\newtheorem{proposition}[theorem]{Proposition}
\newtheorem{example}[theorem]{Example}
\title{Anomalous solutions to nonlinear hyperbolic equations}
\author{Michael Oberguggenberger\thanks{Unit of Engineering Mathematics, University of Innsbruck,
Technikerstra\ss e 13, 6020 Innsbruck,
Austria, (michael.oberguggenberger@uibk.ac.at)}
}
\newcommand{\p}{\partial}
\newcommand{\eps}{\varepsilon}
\newcommand{\dd}{\hspace{1pt}{\rm d}\hspace{0.5pt}}
\newcommand{\ee}{{\rm e}\hspace{1pt}}
\newcommand{\ii}{{\rm i}\hspace{1pt}}
\newcommand{\R}{\mathbb R}
\newcommand{\N}{\mathbb N}
\newcommand{\C}{\mathbb C}
\newcommand{\cC}{{\mathcal C}}
\newcommand{\cD}{{\mathcal D}}
\newcommand{\cE}{{\mathcal E}}
\newcommand{\cF}{{\mathcal F}}
\newcommand{\cG}{{\mathcal G}}
\newcommand{\cH}{{\mathcal H}}
\newcommand{\cS}{{\mathcal S}}
\newcommand{\Cinf}{\ensuremath{{\mathcal C}^\infty}}
\newcommand{\WF}{\mathrm{WF}}
\newcommand{\singsupp}{\mathop{\mathrm{sing supp}}}
\newcommand{\supp}{\mathop{\mathrm{supp}}}
\renewcommand{\Re}{\mathop{\mathrm{Re}}}
\renewcommand{\Im}{\mathop{\mathrm{Im}}}
\newcommand{\arsinh}{\mathop{\mathrm{arsinh}}}
\newcommand{\hpl}{\ensuremath{\R\times [0,\infty) }}
\date{}
\begin{document}

\maketitle

\abstract{
The behavior of sufficiently regular solutions to semilinear hyperbolic equations has attracted a great deal of attention in the past decades, concerning local/global existence, finite time blow-up, critical exponents, and propagation of singularities. Solutions of lower regularity may exhibit unexpected (anomalous) propagation of singularities. The purpose of this paper is to present various striking examples that seemingly have not been addressed in the literature so far. The key issue is the interpretation of the nonlinear operations.
}

\section{Introduction}
\label{sec:intro}
This paper serves to display various unusual, or \emph{anomalous} solutions to semilinear wave equations
\begin{equation}\label{eq:NLWintro}
  \frac1{c^2}\p_t^2 u - \Delta u = f(x,t,u),\quad u(x,0) = u_0(x), \ \p_t u(x,0) = u_1(x)
\end{equation}
in space dimension $n\geq 1$, and to advection-reaction equations
\begin{equation}\label{eq:NLtransintro}
  \frac1{c}\p_t u +\p_x u = f(x,t,u),\quad u(x,0) = u_0(x)
\end{equation}
in one space dimension as prototypical hyperbolic partial differential equations.
For nonlinearities of the form $f(x,t,u) = \pm|u|^p$ or $\pm|u|^{p-1}u$, the main research direction in the past decades has been to find bounds on the exponent $p$ and the regularity of the initial data, asking about the existence of global solutions with small or large initial data, local solutions, self-similar solutions, blow-up in finite time or stability of blow-up. The reader is referred to the discussion in the monograph \cite{Reissig:2018}, the survey article from the 1990s \cite{Struwe:1992}, a collection of currently known critical exponents \cite{Lucente:2018} and some of the papers discussing the development of the field \cite{Georgiev:1997,Takamura:2016}. Relevant literature on self-similar solutions and stationary solutions as building blocks will be quoted at the appropriate place in Section \ref{sec:typeII}.

In order not to introduce additional singularities, the nonlinear function $f$ will be assumed to be smooth here (actually of the form $f(x,t,u) = g(x)u^p$ with integer $p \geq 2$).

In the 1980s and 1990s, a central question has been propagation of singularities, which started with the discovery of Jeffrey Rauch and Michael Reed \cite{RauchReed:1980,RauchReed:1981} that in semilinear hyperbolic equations and systems, singularities do not only propagate out from initial singularities along characteristics or bicharacteristics as in the linear case, but may be created at later times by the interaction of previous singularity bearing (bi-)characteristics. For example, an initial singularity at the origin in problem (\ref{eq:NLWintro}) may lead to singularities in the solution that fill up the solid light cone \cite{Beals:1983}. For a survey of the vast literature up to around 1990 we refer to the monograph \cite{Beals:1989}. Rauch and Reed coined the term \emph{anomalous singularities} for this phenomenon.

The results on anomalous singularities required sufficient overall regularity of the solution, for example $H^s_{\rm loc}$-regularity with $s > (n+1)/2$, and the mechanism for creating the anomalous singularities was still based on characteristics, bicharacteristics and their interaction.

The anomalous solutions presented in this paper are distinguished by
(a) lower regularity than in the previous literature and (b) propagation along non-characteristic curves.
The majority of examples is based on non-regular solutions to the corresponding stationary elliptic equation. Derivatives are always understood in the sense of distributions. In an attempt to categorize the solutions, four types will be singled out:
\smallskip

\begin{tabular}{ll}
\emph{Type I:} &products defined by H\"{o}rmander's wave front set criterion;\\[2pt]
\emph{Type II:} &products and powers evaluated by Nemytskii operators;\\[2pt]
\emph{Type III:} &limits of weak asymptotic solutions;\\[2pt]
\emph{Type IV:} &sequential solutions, especially very weak solutions in the\\[2pt]
     &sense of Ruzhansky.
\end{tabular}
\smallskip

\noindent
It is worth noting that all constructed solutions come with a certain assertion of uniqueness.

The plan of the paper is as follows. Section \ref{sec:regprop} serves to recall results on anomalous propagation of singularities for sufficiently regular solutions, for reasons of comparison. Section \ref{sec:typeI} addresses Type I solutions, introducing the employed multiplication of distributions and discussing the question of regularization. Section \ref{sec:typeII} will exhibit seemingly harmless solutions lying in an $L^p$-space on which the nonlinear operations are defined and continuous (Type II). In Section \ref{sec:typeIII} it will be shown that the solutions from Section \ref{sec:typeII} arise as limits of nets of asymptotic solutions (satisfying the equations up to an error term converging weakly to zero, Type III). In Section \ref{sec:typeIV} nets of smooth functions $(u_\eps)_{\eps > 0}$ will be constructed that solve the equations at each fixed $\eps > 0$, but need not necessarily converge as $\eps\to 0$ (Type IV). Nevertheless, their regularity properties can be characterized by suitable estimates on their growth in terms of negative powers of $\eps$ as $\eps\to 0$. The appendix serves to recall some notions required to define the products arising in Type I solutions.

The author has been aware of the existence of these anomalous solutions since the early 1980s, but due to a lack of explanation, hesitated to publish them so far. It is hoped that this publication will arouse interest in these types of solutions among the community. Many more examples of similar nature are known, collected by the author and in joint work with Hideo Deguchi \cite{Deguchi}.

What concerns notation, $H^s$ denotes the usual Sobolev space based on $L^2$; $\cC^k$ denotes the space of $k$-times differentiable functions, $\cC^k_b$ the subspace of functions with bounded derivatives up to order $k$. The notation for spaces of test functions and distributions follows \cite{Schwartz:1966}. The Fourier transform is defined as
$\cF\varphi(\xi) = \int \ee^{-2\pi \ii x\xi}\varphi(x)\dd x$.

\section{Propagation of singularities for regular solutions}
\label{sec:regprop}

This section serves to recall results from the 1980s on propagation of singularities for solutions to semilinear hyperbolic systems.
These results hold for sufficiently regular solutions ($L^\infty_{\rm loc}$ in one space dimension, $H^s_{\rm loc}$ for $s > (n+1)/2$ in
space dimension $n$). We do not strive for full generality -- the quoted results will be contrasted with the much less regular solutions to be constructed in the following sections.

We start with $(m\times m)$-systems of first order hyperbolic equations in one space dimension, considering the initial value problem
\begin{equation} \label{eq:hypsys}
   \begin{array}{l}
   (\p_t + \Lambda \p_x)u(x,t) = f(x,t,u(x,t)),\ (x,t) \in R\\[4pt]
   u(x,0) = u_0(x),\ x\in R_0
   \end{array}
\end{equation}
where $R_0 \subset\R$ is an interval and $R \subset \hpl$ is its domain of determinacy. Here $u = (u_1,\ldots,u_m)$,
$\Lambda = {\rm diag}(\lambda_1, \ldots, \lambda_m)$ with real and constant entries $\lambda_i$, and $f = (f_1,\ldots,f_m)$ is smooth.
Let $x_1, \ldots, x_k \in R_0$ and denote by  $S_0$ the union of characteristic lines emanating from $x_1, \ldots, x_k$.
Following \cite{RauchReed:1981}, construct the forward characteristic lines starting at the intersection points of $S_0$ and call this set $S_1$.
Let $S_2$ be the set of forward characteristic lines starting from the intersection points of $S_1$. Continue recursively to construct a sequence of sets $S_j$.
Let $S$ be the closure of $\bigcup_{j=0}^\infty S_j$ intersected with $R$.

\begin{proposition}
\label{prop:RR1981}
Let $u\in (L^\infty(R))^m$ satisfy (\ref{eq:hypsys}) in the sense of distributions and take on the initial data $u_0 \in (L^\infty(R_0))^m$.
Suppose that $u_0$ is $\Cinf$ with each derivative uniformly bounded on the complement of the finitely many points $x_1,\ldots, x_k$.
Then $u$ is $\Cinf$ on $R\setminus S$ and all derivatives of $u$ have continuous extensions from
each connected component of $R\setminus S$ to its closure.
\end{proposition}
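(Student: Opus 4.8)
The plan is to exploit that, with $\Lambda$ constant and diagonal, the system decouples in its principal part: each component satisfies a scalar transport equation $(\p_t + \lambda_i\p_x)u_i = f_i(x,t,u)$ along the characteristics $x - \lambda_i t = \text{const}$, the components being coupled only through the smooth zero-order term $f$. First I would pass from the distributional formulation to the coupled system of Duhamel integral equations
\[
  u_i(\xi + \lambda_i s, s) = u_{0,i}(\xi) + \int_0^s f_i\big(\xi + \lambda_i\tau,\, \tau,\, u(\xi+\lambda_i\tau,\tau)\big)\,\dd\tau,
\]
where $\xi = x - \lambda_i t$. For an $L^\infty$ solution this representation, together with uniqueness of the solution for given $L^\infty$ data, is obtained in the standard way by mollifying in the characteristic directions, using that $f$ is smooth (hence locally Lipschitz in $u$), and closing the estimate with Gronwall's inequality. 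This already shows that each $u_i$ has a representative that is absolutely continuous along its characteristics.

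Next I would prove the inclusion $\singsupp u \subset S$ by following the recursive geometry of $S$. Since the principal part transports $u_i$ along $i$-characteristics and $f$ is smooth, a singularity of $u_i$ can sit only on an $i$-characteristic, and such a characteristic can be launched only from a point at which the source $f_i(\cdot,\cdot,u)$ fails to be smooth -- that is, from a point where some component $u_j$ is itself singular. At $t=0$ the only such points are $x_1,\dots,x_k$, producing the characteristics in $S_0$; a new singular $i$-characteristic can subsequently originate only where two previously singular characteristics meet, which is precisely the crossing-point construction yielding $S_1, S_2,\dots$. Thus singularities remain confined to $\bigcup_j S_j$ and, after taking closures, to $S$. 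Run quantitatively, the same bookkeeping yields continuity of $u$ on $R\setminus S$ and a continuous extension of $u$ to the closure of each connected component.

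To upgrade continuity to $\Cinf$ on $R\setminus S$ I would argue component- and region-wise. Fix $P\in R\setminus S$ and its connected component $\Omega$; within the backward domain of dependence of $P$ the set $S$ subdivides the region into finitely many polygonal cells, which I order causally, from $t=0$ upward. On each cell the lower boundary carries smooth data (either a smooth piece of $u_0$ or the trace inherited from a cell already treated), the source $f$ is smooth there, and along each characteristic the solution solves the ordinary differential equation $\p_s U = f(\xi+\lambda_i s,\, s,\, U)$; smooth dependence of ODE solutions on initial values and parameters -- equivalently, differentiating the integral equation and solving the resulting linear variational equations for the derivatives -- gives $u\in\Cinf$ on the cell up to its closure. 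Proceeding cell by cell transports smoothness up to $P$, and the continuous extension of all derivatives to $\overline\Omega$ follows from the boundedness of the data and all of its derivatives away from $x_1,\dots,x_k$.

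The principal difficulty is to control the interaction of singularities and the geometry of $S$: I must show that the recursive family $S_0,S_1,\dots$ really captures every point at which a new singularity can be born, and that within the relevant compact subregions only finitely many cells occur, so that the causal induction terminates -- the possible accumulation of crossing points being absorbed into the closure defining $S$. A secondary technical point is to make the characteristic restriction, the differentiation under the integral sign, and the interaction analysis rigorous starting only from an $L^\infty$ solution, and to handle coincident speeds $\lambda_i=\lambda_j$, where the corresponding components share characteristics and the transverse regularity has to be extracted jointly rather than from a single transport equation.
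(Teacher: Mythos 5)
The paper does not actually prove this proposition: its ``proof'' is the single line ``This is Theorem 1 from \cite{RauchReed:1981}'', so the benchmark for your attempt is Rauch--Reed's own argument, not anything internal to the paper. Your outline reproduces the skeleton of that argument correctly --- passage to integral equations along characteristics via mollification and Gronwall, induction over the causal geometry of $S$, cell-by-cell smoothing --- but it has a genuine gap exactly where the substance lies. Your middle step asserts that a singularity of $u_i$ can sit only on an $i$-characteristic launched from a point where the source $f_i(\cdot,\cdot,u)$ fails to be smooth, and that new singular characteristics can be born only at crossings of old ones. For the coupled system this is the \emph{conclusion} of the theorem, not an available fact: $u_i$ solves a transport equation whose right-hand side is only as regular as $u$ itself, so nothing a priori prevents $f_i(\cdot,\cdot,u)$ from being singular off $S$. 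For the same reason your final upgrade to $\Cinf$ is circular: differentiating the integral equation, or invoking smooth dependence for the characteristic ODEs, requires the source --- hence all components of $u$ --- to already possess the derivatives you are trying to produce.

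What is missing is the mechanism that breaks this circularity and is the heart of Rauch--Reed's proof: a regularity-gain lemma stating that integrating, along an $i$-characteristic, a function that is singular only across transversal $j$-characteristics ($j\neq i$) produces a function one degree smoother across those lines. This gain of one derivative per interaction is what makes the induction on the generations $S_0,S_1,S_2,\dots$ close: singularities created at the $j$-th generation are weaker by $j$ orders, $u_i$ itself can jump only across $i$-lines while across transversal lines only derivatives of progressively higher order jump, and in particular the traces of the inflowing components on the singular lines bounding each cell --- which you quietly use as smooth data for what is really a Goursat-type characteristic boundary-value problem, since with constant $\Lambda$ a backward $i$-characteristic exits a cell through a $j$-line, not an $i$-line --- are smooth from the relevant side. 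Without this lemma your causal cell induction cannot get past the first interaction point, and the ``principal difficulty'' you flag at the end (that $S$ captures every birth point of singularities) is not a technical remainder but precisely the theorem. Your worry about coincident speeds is reasonable but moot: Rauch--Reed assume strict hyperbolicity, and if $\lambda_i=\lambda_j$ the two components can simply be grouped into one vector-valued unknown transported along the shared characteristic.
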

\begin{proof}
This is Theorem 1 from \cite{RauchReed:1981}.
\end{proof}
\begin{remark}\label{rem:RR1981}
(a) If the function $f$ is linear, then the solution $u$ is in $\Cinf$ on $R\setminus S_0$ -- singularities can only lie on characteristic curves tracing back to the singularities of the initial data. In the nonlinear case, the solution is not $\Cinf$ on $S\setminus S_0$, in general. The singularities belonging to $S\setminus S_0$ in the nonlinear case have been termed \emph{anomalous singularities} by the authors.

(b) In the scalar case and in the case of $(2\times 2)$-systems (thus $m=1$ or $m=2$), $S = S_0$, so no anomalous singularities arise.
\end{remark}

Next we recall a result of \cite{Rauch:1979} on propagation of singularities for semilinear wave equations. Consider the initial value problem
\begin{equation} \label{eq:wave}
   \begin{array}{l}
   (\p^2_t - \Delta)v(x,t) = f(v(x,t)),\ (x,t) \in \R^n\times \R,\\[4pt]
   v(x,0) = v_0(x),\ \p_t v(x,0) = v_1(x),\ x\in \R^n,
   \end{array}
\end{equation}
where $f$ is a polynomial with $f(0) = 0$, $\Delta$ denotes the $n$-dimensional Laplace operator, and $u_0\in H^s_{\rm loc}(\R^n)$, $u_1\in H^{s-1}_{\rm loc}(\R^n)$
with $s > (n+1)/2$. Note that $H^s_{\rm loc}(\R^n\times\R)$ is an algebra in this case, even contained in the space of continuous functions, so $f(u)$ is classically defined.

\begin{proposition}
\label{prop:R1979}
Let $s>(n+1)/2$ and  $v\in H^s_{\rm loc}(\R^n\times\R)$ satisfy (\ref{eq:wave}) in the sense of distributions. Suppose that $v_0$ and $v_1$ belong to
$\Cinf(\R^n\setminus\{0\})$. Then $v$ is $\Cinf$ on $\{|x| > |t|\}$, and it belongs to $H^{s+1+\sigma}_{\rm loc}(\R^n\times\R)$ on $\{|x| < |t|\}$
for all $\sigma < s - (n+1)/2$.
\end{proposition}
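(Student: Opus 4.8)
The plan is to separate the exterior and interior behaviour, treating the exterior by finite propagation speed and the interior by a microlocal bootstrap that isolates the nonlinear interaction at the vertex of the light cone. Since $s>(n+1)/2$, the space $H^s_{\rm loc}(\R^n\times\R)$ is an algebra contained in the continuous functions, so the polynomial $f$ with $f(0)=0$ yields $g:=f(v)\in H^s_{\rm loc}$, and the equation becomes the linear wave equation $(\p_t^2-\Delta)v=g$ with an $H^s_{\rm loc}$ right-hand side. For the exterior assertion I would invoke finite propagation speed: a point $(x_0,t_0)$ with $|x_0|>|t_0|$ has backward light cone meeting $\{t=0\}$ in the ball $\{|x-x_0|\le|t_0|\}$, whose distance to the origin is $|x_0|-|t_0|>0$, so the Cauchy data are $\Cinf$ on a neighbourhood of that ball. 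Local existence for smooth data together with uniqueness in $H^s_{\rm loc}$ then forces $v$ to agree there with a smooth solution, whence $v\in\Cinf$ on $\{|x|>|t|\}$.

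The interior is the substantial part, and the first step is to locate the singularities. By Hörmander's theorem on propagation of singularities for $(\p_t^2-\Delta)v=g$, together with the elementary fact that $\WF(f(v))\subseteq\WF(v)$ for the Nemytskii operator of a smooth $f$ acting on a continuous function, one first obtains $\WF(v)\subseteq\{\tau^2=|\xi|^2\}$; propagating the point singularity of the data at the origin then shows that, away from the vertex, the singularities of $v$ are confined to the light cone $\{|x|=|t|\}$. Away from its vertex this cone is a smooth characteristic hypersurface, and I would show that $v$ is conormal with respect to it, of transverse regularity $H^s$; since $s>1/2$ in the transverse variable, the square, and hence $f(v)$, is again conormal of the same class, so no singularity is radiated into the interior off the smooth part of the cone. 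This confines the creation of interior singularities to the conic vertex at the origin, where all null bicharacteristics cross.

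The core estimate quantifies the field radiated from the vertex into the solid cone. In the forward region write $v=v_{\rm lin}+E_+*f(v)$, with $E_+$ the forward fundamental solution and $v_{\rm lin}$ the linear Cauchy solution, which is $\Cinf$ in the interior; the backward region is symmetric with $E_-$. Two facts then combine. First, solving the wave equation gains one derivative in the isotropic Sobolev scale: if $f(v)\in H^\mu_{\rm loc}$ in the interior, then $E_+*f(v)\in H^{\mu+1}_{\rm loc}$ there. Second, although $f(v)\in H^s_{\rm loc}$ only, its interior restriction is better: because $v$ is conormal on the cone, the quadratic term $v^2$ is, away from the cone, a product of two factors whose conormal wave front directions interact only through the vertex, and the microlocal multiplication estimate in space-time dimension $N=n+1$ places this non-resonant part in $H^{2s-(n+1)/2-\eps}_{\rm loc}$ for every $\eps>0$. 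Setting $\sigma=s-(n+1)/2-\eps$ and adding the derivative gained from $E_+$ yields $v\in H^{s+1+\sigma}_{\rm loc}$ on $\{|x|<|t|\}$ for all $\sigma<s-(n+1)/2$. Higher powers in $f$ involve three or more conormal factors and are strictly smoother in the interior, so they do not lower the exponent.

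The main obstacle I expect is precisely this last step: making the radiated-regularity estimate rigorous at the conic vertex, where the cone fails to be a smooth hypersurface and the conormal calculus adapted to a single characteristic surface no longer applies. Controlling the quadratic interaction there — equivalently, estimating $E_+*f(v)$ in the interior with the sharp exponent — is what forces the threshold $(n+1)/2$ and the loss $\sigma$, and is the technical heart of the proposition. A cleaner alternative would be to run the entire bootstrap in anisotropic microlocal Sobolev spaces adapted to the light cone, upgrading the interior regularity of $g=f(v)$ from $H^s$ to the improved class and feeding it back through the energy estimate for $\p_t^2-\Delta$; the price is the same multiplication inequality and hence the same restriction $\sigma<s-(n+1)/2$.
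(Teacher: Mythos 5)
The paper does not actually prove this proposition: its ``proof'' is a citation of Theorems 1.1 and 3.1 of Rauch's 1979 paper, so you are attempting to reconstruct a substantial theorem from scratch. Your exterior argument (finite propagation speed, local solvability for smooth data, uniqueness in $H^s_{\rm loc}$, persistence of regularity) is essentially the standard route and is sound in outline; that half corresponds to Rauch's Theorem 1.1.

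The interior argument, however, rests on a false lemma and on an intermediate claim that proves too much. First, for smooth $f$ acting on a function of limited Sobolev regularity it is \emph{not} true that $\WF(f(v))\subseteq\WF(v)$. What is true is Rauch's lemma: if $v\in H^s_{\rm loc}$ with $s>N/2$, $N=n+1$, and $v$ is microlocally $H^r$ at a point with $s\le r\le 2s-N/2$, then $f(v)$ is microlocally $H^r$ there --- and nothing beyond the threshold $2s-N/2$ can be asserted. This limitation is not a technicality; it is exactly the origin of the exponent in the statement: in the bootstrap the source $f(v)$ can only be upgraded microlocally to $H^{2s-N/2-\eps}$, and solving the wave equation gains one derivative, giving $H^{2s-N/2+1-\eps}=H^{s+1+\sigma}$ with $\sigma<s-(n+1)/2$. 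Second, even granting your inclusion, the deduction $\WF(v)\subseteq\{\tau^2=|\xi|^2\}$ is circular: microlocal elliptic regularity at a noncharacteristic point requires knowing beforehand that $f(v)$ is microlocally smooth there, which via your inclusion requires knowing that $v$ is --- the very thing to be proved; the honest bootstrap stalls precisely at the Rauch threshold. Third, the conclusion you draw --- singularities confined to the cone $\{|x|=|t|\}$ away from the vertex, hence $v\in\Cinf$ in the open interior --- is false for $n\ge 2$: as the paper itself notes immediately after the proposition (citing Beals), such solutions need not be better than $H^{s+1+\sigma}$ anywhere in $\{|x|<|t|\}$; singularities can fill the solid cone. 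This also contradicts your own third paragraph, in which nontrivial radiation from the vertex occupies the interior. Finally, the conormality of $v$ with respect to the cone is asserted rather than proved, and it does not follow from the hypotheses: $v_0,v_1$ are only assumed smooth off the origin, not conormal at it, and conormal-regularity theorems require conormal data. The repair is to drop full wave-front confinement altogether and run the bootstrap with Rauch's lemma combined with propagation of microlocal $H^r$ regularity along null bicharacteristics (which, for points inside the cone, reach $t=0$ at points away from the origin where the data are smooth); that argument yields exactly $v\in H^{s+1+\sigma}_{\rm loc}$ inside the cone for all $\sigma<s-(n+1)/2$, and is the content of Rauch's Theorem 3.1.
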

\begin{proof}
This follows from Theorem 3.1, together with Theorem 1.1 of \cite{Rauch:1979}.
\end{proof}
\begin{remark}\label{rem:R1979}
In space dimension $n=1$, the solution $v$ is actually $\Cinf$ in $\{|x| < |t|\}$, as follows from the Corollary to Theorem 2 in \cite{RauchReed:1980} as well as the earlier paper \cite{Reed:1978}.
\end{remark}
It is known that the solution is not necessarily better than $H^{s+1+\sigma}$ in $\{|x| < |t|\}$ in space dimension $n \geq 2$. For a survey of the state of the art around 1990, see \cite{Beals:1989}.

\section{Type I solutions -- multiplication of distributions}
\label{sec:typeI}
In this section, we address weak solutions to nonlinear equations where the involved products or powers exist in the sense of H\"{o}rmander's wave front set criterion \cite{Hoermander:1971}. The examples will be based on the one-dimensional distribution
\begin{equation}\label{eq:u0}
   u_0(x) = \frac1{x+\ii 0} = \lim_{\eps\to 0}\frac1{x+\ii \eps} = {\rm vp}\frac1{x} - \ii\pi\delta(x)
\end{equation}
also denoted by $\delta_+(x)$ in the physics literature. Here ${\rm vp}\frac1{x}$ denotes the principal value distribution ${\rm vp}\frac1{x} = \p_x\log|x|$ and
$\delta(x)$ is the Dirac measure.
The Fourier transform of $u_0(x)$ and its auto-convolution are
\[
   (\cF u_0)(\xi) = -2\pi\ii H(\xi)\quad {\rm and}\quad (\cF u_0\ast\cF u_0)(\xi) = -4\pi^2 \xi H(\xi)
\]
where $H$ denotes the Heaviside function. In particular, the wavefront set of $u_0$ is $\{(0,\xi):\xi > 0\}$, thus $u_0^2$ exists according to H\"{o}rmander's criterion.
Actually, it can simply be computed as Fourier product (see Appendix),
\[
    u_0^2 = \cF^{-1}(\cF u_0\ast\cF u_0),
\]
as well as all its powers. It holds that
\begin{equation}\label{eq:u02}
   u_0^2(x) = \Big(\frac1{x+\ii 0}\Big)^2 = -\Big( \frac1{x+\ii 0}\Big)' = {\rm Pf}\frac1{x^2} + \ii\pi\delta'(x) = -u_0'(x)
\end{equation}
where ${\rm Pf}\frac1{x^2}$ is the Hadamard finite part distribution, and
\begin{equation}\label{eq:u03}
   2u_0^3(x) = 2\Big(\frac1{x+\ii 0}\Big)^3 = \Big( \frac1{x+\ii 0}\Big)'' = u_0''(x).
\end{equation}

\subsection{A nonlinear advection-reaction equation}
\label{subsec:ARE}
\begin{proposition}\label{prop:ARE}
The distribution $u(x,t) \equiv u_0(x)$ given by (\ref{eq:u0}) is a weak solution to the initial value problem
\begin{equation}\label{eq:ARp2}
  \frac1{c}\p_t u + \p_x u + u^2 = 0,\quad u(x,0) = u_0(x)
\end{equation}
for whatever $c\in\R, c\neq 0$, where the square is understood in the sense of H\"{o}rmander's product.
\end{proposition}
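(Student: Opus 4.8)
The plan is to exploit the fact that the ansatz $u(x,t) \equiv u_0(x)$ does not depend on $t$, so that the time derivative drops out and the equation collapses to the purely spatial identity already recorded in the excerpt. First I would observe that, viewed as a distribution on $\R^2$, the function $u = u_0 \otimes 1$ is the tensor product of $u_0$ with the constant $1$ in the $t$-variable; consequently $\p_t u = 0$ (the $t$-derivative of a $t$-independent distribution), so that (\ref{eq:ARp2}) reduces to verifying $\p_x u + u^2 = 0$ together with the trivially satisfied initial condition $u(x,0) = u_0(x)$.

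The one step that requires genuine care is giving meaning to the square $u^2$ on $\R^2$ and identifying it. I would compute the wave front set of $u = u_0\otimes 1$: since $1$ is smooth and $\WF(u_0) = \{(0,\xi):\xi > 0\}$, the tensor-product rule gives $\WF(u) = \{((0,t),(\xi,0)): t\in\R,\ \xi > 0\}$. For two such covectors $(\xi,0)$ and $(\xi',0)$ with $\xi,\xi' > 0$ the sum $(\xi+\xi',0)$ never vanishes, so H\"ormander's criterion is met and $u^2$ is well defined. To identify the product I would pass to the regularisation $u_\eps(x) = 1/(x+\ii\eps)$: the nets $u_\eps\otimes 1$ converge to $u$ and $(u_\eps\otimes 1)^2 = u_\eps^2\otimes 1$, whence in the limit $u^2 = u_0^2\otimes 1$, the right-hand factor being exactly the one-dimensional H\"ormander square computed in (\ref{eq:u02}). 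Alternatively, the Fourier-product characterisation recalled in the Appendix respects the tensor structure directly, since $\cF(u_0\otimes 1) = (\cF u_0)\otimes\delta$ and the auto-convolution factorises accordingly.

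With the product identified, the proof closes immediately. By (\ref{eq:u02}) one has $u_0^2 = -u_0'$, hence $\p_x u + u^2 = (u_0' + u_0^2)\otimes 1 = 0$, and together with $\p_t u = 0$ this verifies (\ref{eq:ARp2}) in the sense of distributions for every $c\neq 0$. The main obstacle is therefore not the algebra, which is a one-line consequence of the stationary identity, but the bookkeeping needed to justify that the two-dimensional H\"ormander square factors as $u_0^2\otimes 1$; once the wave front set computation and the regularisation argument are in place, nothing further is required.
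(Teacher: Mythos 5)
Your proof is correct and takes essentially the same route as the paper, whose entire argument is the observation that $\p_t u = 0$ and that, by (\ref{eq:u02}), $\p_x u + u^2 = 0$. The only difference is that you make explicit the tensor-product bookkeeping ($u = u_0\otimes 1$, $\WF(u) \subset \{((0,t),(\xi,0)) : \xi > 0\}$, and $u^2 = u_0^2\otimes 1$ via $\cF(u_0\otimes 1) = \cF u_0\otimes\delta$) that identifies the two-dimensional H\"{o}rmander square with the one-dimensional square of $u_0$ --- a point the paper leaves implicit.
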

\begin{proof}
It is clear from (\ref{eq:u02}) that $\p_x u + u^2 = 0$ and that $\p_t u = 0$.
\end{proof}
Clearly, the mechanism producing this result is that the stationary solution satisfies the nonlinear differential relation $u_0' = - u_0^2$. Further reasons why a genuine distribution can satisfy such a relation will be discussed below. At first we wish to point out that the solution given in Proposition~\ref{prop:ARE} exhibits anomalous propagation of singularities. Indeed,
\[
  \singsupp u = \{(x,t): x = 0, t\geq 0\}
\]
while the expected singular support from Proposition~\ref{prop:RR1981} or Remark~\ref{rem:RR1981}(b) should be
$\{(x,t): x = ct, t\geq 0\}$. To be sure, $u_0$ does not belong to $L^\infty$ as required in Proposition~\ref{prop:RR1981}.

\begin{remark}\label{rem:otherspeeds}
It should be noted that anomalous propagation of singularities is not confined to stationary solutions. The following example, due to Deguchi \cite{Deguchi}, shows that any anomalous
propagation speed is possible. Indeed,
\begin{equation}\label{eq:otherspeeds}
   u(x,t) = \frac{1}{ax + bc t +\ii 0}
\end{equation}
with $a+b=1$ solves equation (\ref{eq:ARp2}) with initial data $u_0(x) = 1/(ax + \ii 0)$, noting that the Fourier product respects affine transformations of the independent variables. The singular support is
\[
  \singsupp u = \{(x,t): ax + bct = 0, t\geq 0\},
\]
which is a non-characteristic line if $a\neq b$.
\end{remark}

\begin{remark}
One possible explanation why the mentioned nonlinear differential relation, as well as similar relations for the higher derivatives, hold for the specific distribution (\ref{eq:u0}) can be obtained by studying its representation as a boundary value of an analytic function. Indeed, every distribution $v\in\cD'(\R)$ can be represented as the boundary value of a function $\widehat{v}(z)$, analytic in $\C\setminus\supp(u)$, in the sense
\begin{equation}\label{eq:BV}
v(x) =  \lim_{\eps\to 0}(\widehat{v}(x+\ii\eps) - \widehat{v}(x-\ii\eps))
\end{equation}
in $\cD'(\R)$, see e.g. \cite{Tillmann:1961}.
If $v$ is a distribution of compact support, $\widehat{v}(z)$ is given by the Fantappi\`{e} indicatrix
\[
   \widehat{v}(z) = \frac1{2\pi\ii}\big\langle v(x),\frac1{x-z}\big\rangle
\]
and in general by a partition of unity procedure.
Further, $|\widehat{v}(z)|$ grows at most like a negative power of $|\Im z|$ as $\Im z \to 0$, locally uniformly in $\Re z$. The representation $\widehat{v}(z)$
is unique up to a function analytic on $\C$. Further, every function $\widehat{v}(z)$, analytic in $\C\setminus\R$ and satisfying the growth condition has a distributional boundary value in the sense of (\ref{eq:BV}).

If the support of $\widehat{v}(z)$ is contained in $\{\Im z > 0\}$, the representation is unique.
Thus the space of distributions $\cH_+(\R)$ whose Fantappi\`e parametrix has support in the upper complex half plane is isomorphic to the space of analytic functions in the upper complex half plane satisfying the mentioned growth condition. However, the latter space is a differential algebra, the differential-algebraic structure of which can be transported to  $\cH_+(\R)$, rendering it a differential algebra \cite{Tillmann:1961}. (Similar constructions have also been elaborated in \cite{Ivanov:1979}.)

This is exactly the case with $u_0(x)$ given by (\ref{eq:u0}) for which
\[
   \widehat{u}_0(z) = \left\{ \begin{array}{ll} \frac1{z}, & \Im z > 0,\\[2pt]
                                0, & \Im z < 0.
                                \end{array}\right.
\]
In the algebra of analytic functions in the upper half plane, the functional relation
\[
    \frac{\dd^k}{\dd z^k}\Big(\frac1{z}\Big) = (-1)^k k! \Big(\frac1{z}\Big)^{k+1}, \quad z \neq 0
\]
is valid. In this way, formulas (\ref{eq:u02}) and (\ref{eq:u03}) are explained.
The differential-algebraic relations persist in the boundary values.
\end{remark}

\subsubsection{Analytic regularization}

It will be instructive to study the behavior of approximate solutions when the initial data are regularized. The first obvious possibility is to consider the analytic regularization defining the distribution $u_0(x) = 1/(x+\ii 0)$. We wish to solve the regularized problem
\begin{equation}\label{eq:AREreg}
  \frac{1}{c}\p_t u_\eps + \p_x u_\eps + u_\eps^2 = 0,\quad u_\eps(x,0) = u_{0\eps}(x) = \frac1{x +\ii \eps}.
\end{equation}
Solving (\ref{eq:AREreg}) by the method of characteristics results in the unique classical solution
\[
   u_\eps(x,t) = \frac{u_{0\eps}(x-ct)}{1 + ct u_{0\eps}(x-ct)} = \frac{\frac1{x-ct +\ii \eps}}{1 + ct \frac1{x-ct +\ii \eps}} = \frac1{x +\ii \eps}.
\]
Thus, by simple arithmetic, $ u_\eps(x,t) \equiv u_{0\eps}(x)$ and so the solution given in Proposition~\ref{prop:ARE} coincides with the weak limit of approximate solutions when the initial data are replaced by their analytic regularization.

\subsubsection{Regularization by convolution with a mollifier}

The purpose of this subsection is to show that the convergence of the approximate solution is a peculiarity of the analytic regularization and does not hold if the initial data are regularized by convolution with a standard Friedrichs mollifier
$\varphi_\eps(x) = \eps^{-1}\varphi(x/\eps)$ with $\varphi\in\cD(\R)$, $\int\varphi(x)\dd x = 1$.
For the sake of the argument, we take $\varphi\geq 0$ symmetric, $\supp\varphi \subset (-1,1)$. Thus let
\[
   U_{0\eps}(x) = (u_{0}\ast\varphi_\eps)(x)
\]
and let $U_\eps(x,t)$ be the corresponding classical solution to (\ref{eq:AREreg}) with initial condition $U_\eps(x,0) = U_{0\eps}(x)$. By the method of characteristics,
\[
   U_\eps(x,t) =  \frac{({\rm vp\frac{1}{x}}\ast \varphi_\eps)(x-ct) - \ii\pi \varphi_\eps(x-ct)}{1 + ct\big(({\rm vp\frac{1}{x}}\ast \varphi_\eps)(x-ct) - \ii\pi \varphi_\eps(x-ct)\big)}.
\]
In particular,
\begin{equation}\label{eq:AREblowup}
   U_\eps(ct-\eps,t) = \frac{({\rm vp\frac{1}{x}}\ast \varphi_\eps)(-\eps)}{1 + ct({\rm vp\frac{1}{x}}\ast \varphi_\eps)(-\eps)}.
\end{equation}
We show that the solution $U_\eps(x,t)$ blows up at latest at
\[
   t_\eps = \frac{- 1/c}{({\rm vp\frac{1}{x}}\ast \varphi_\eps)(-\eps)} = \frac{1/c}{({\rm vp\frac{1}{x}}\ast \varphi_\eps)(\eps)}
\]
and that this number is of order $\eps$ as $\eps\to 0$. Thus there is no global solution, when Friedrichs regularization is used.

Indeed, starting from the defining formula
\[
   ({\rm vp\frac{1}{x}}\ast \varphi_\eps)(x) = \lim_{\eta\to 0}\int_{|x-y|\geq\eta}\frac{\varphi_\eps(y)}{x-y}\dd y,
\]
some simple manipulations using the support properties of $\varphi$ lead to
\[
   ({\rm vp\frac{1}{x}}\ast \varphi_\eps)(-\eps) = \lim_{\eta\to 0}\int_{-1+\eta/\eps}^\infty \frac{\varphi(y)}{-\eps(1+y)}\dd y
       = -\frac1{\eps}\int_{\supp\varphi}\frac{\varphi(y)}{1+y}\dd y = -\frac1{\eps}C_\varphi
\]
where $C_\varphi$ is a positive constant. This shows that the denominator in (\ref{eq:AREblowup}) is indeed zero at $t_\eps = \eps/cC_\varphi$, while the numerator is nonzero.

\subsubsection{Separation in real and imaginary part}

One might argue that the complex valued initial value problem (\ref{eq:AREreg}) is actually a real valued, nonstrictly hyperbolic system. This is indeed the case; the real and imaginary part of the analytically regularized solution are
\[
  u_\eps(x,t) =\frac1{x+\ii \eps} = v_\eps(x,t) + \ii w_\eps(x,t) = \frac{x}{x^2+ \eps^2} - \ii \frac{\eps}{x^2+ \eps^2}.
\]
The hyperbolic system for the real and imaginary part is
\[
  \begin{array}{lcl}
   \p_t v_\eps + \p_x v_\eps &=& -v_\eps^2 + w_\eps^2,\\[4pt]
   \p_t w_\eps + \p_x w_\eps & =& -2v_\eps w_\eps.
   \end{array}
\]
Here $v_\eps(x,t)\to {\rm vp}\,\frac{1}{x}$ and  $w_\eps(x,t)\to -\pi \delta(x)$ as $\eps\to 0$. However, it is well-known (and rather immediate) that $v_\eps^2$ and $w_\eps^2$ do not converge in $\cD'(\R)$ as $\eps\to 0$. Thus the individual terms in the first line make no sense in the limit.
(By purely arithmetic manipulations involving $1/(x+\ii \eps)$ and $1/(x-\ii \eps)$ and their limits, the limit in the right-hand side of the second line is seen to exist and to equal $-\pi\delta'(x)$.)

\subsection{A nonlinear wave equation}
\label{subsec:NLW1D}

In the same vein, the distribution $u_0(x)$ can serve to produce a solution to a semilinear wave equation in one space dimension.
\begin{proposition}\label{prop:NLW1D}
The distribution $u(x,t) = u_0(x)$ given by (\ref{eq:u0}) is a weak solution to the initial value problem
\begin{equation}\label{eq:NLW1D}
  \frac1{c^2}\p_t^2 u - \p_x^2 u + 2u^3 = 0,\quad u(x,0) = u_0(x), \ \p_t u(x,0) = 0
\end{equation}
for whatever $c > 0$, where the cubic term is understood in the sense of H\"{o}rmander's product.
\end{proposition}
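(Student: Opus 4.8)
The plan is to exploit that the proposed solution $u(x,t)=u_0(x)$ is \emph{stationary}, i.e.\ independent of $t$, exactly as in Proposition~\ref{prop:ARE}. First I would note that this disposes of all the time-dependent parts of the problem at once: since $\p_t u = 0$ identically, we have $\p_t^2 u = 0$, so the condition $\p_t u(x,0)=0$ holds automatically, while $u(x,0)=u_0(x)$ holds by construction. The wave equation (\ref{eq:NLW1D}) therefore collapses to the purely spatial distributional identity $-\p_x^2 u_0 + 2u_0^3 = 0$, and this single relation is the entire content of the proposition.

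The second step is simply to recognize this relation as formula (\ref{eq:u03}), which asserts $u_0'' = 2u_0^3$. Substituting yields $-u_0'' + 2u_0^3 = -2u_0^3 + 2u_0^3 = 0$, as required. The verification is thus as immediate as in Proposition~\ref{prop:ARE}, with the second-order identity (\ref{eq:u03}) now playing the role that the first-order identity (\ref{eq:u02}) played there.

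The one point that genuinely requires care --- and which I regard as the main obstacle --- is the well-definedness of the cubic term as a H\"ormander product, not merely on the line but on the $(x,t)$-plane where the equation lives. On the line this is already settled: since $\WF(u_0) = \{(0,\xi):\xi>0\}$ lies in a single open half-line, no sum of three of its directions can reach the origin, so H\"ormander's criterion licenses $u_0^3$, computed as the Fourier product $\cF^{-1}(\cF u_0 \ast \cF u_0 \ast \cF u_0)$ underlying (\ref{eq:u03}). To lift this to the equation, I would write $u(x,t)=u_0\otimes 1$ as the tensor product of $u_0$ with the constant $1$ in $t$; its wave front set in $\R^2$ is then $\{((0,t),(\xi,0)) : t\in\R,\ \xi>0\}$, in which the $t$-component of every covector vanishes, so the same half-line condition persists and the H\"ormander cube exists on $\R^2$. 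By compatibility of the product with tensor products it equals $u_0^3\otimes 1$, i.e.\ the one-dimensional cube of (\ref{eq:u03}) read as $t$-independent, and the distributional identity (\ref{eq:NLW1D}) holds verbatim.
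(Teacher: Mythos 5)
Your proposal is correct and takes essentially the same route as the paper, whose entire proof is the observation that $\p_t u = 0$ and that (\ref{eq:u03}) gives $-\p_x^2 u + 2u^3 = 0$ in $\cD'$. Your additional step --- checking via the tensor-product wave front set $\WF(u_0\otimes 1)$ that the H\"ormander cube is well defined on the $(x,t)$-plane and agrees with the one-dimensional cube of (\ref{eq:u03}) --- is carried out correctly and merely makes explicit a point the paper leaves implicit.
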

\begin{proof}
It is clear from (\ref{eq:u03}) that $-\p_x^2 u + 2u^3 = 0$ and that $\p_t u = 0$.
\end{proof}
In the real-valued case, the wave equation (\ref{eq:NLW1D}) has a so-called defocusing nonlinearity. For initial data $(u_0,u_1)$
in $H^1(\R)\times L^2(\R)$, it would have a unique global finite energy solution \cite{Struwe:1992}, belonging to
$\cC([0,\infty):H^1(\R))\cap\cC^1([0,\infty):L^2(\R))$. By local existence theory, it could also be extended to small negative times, and hence would belong to $L^\infty_{\rm loc}$ in an open neighborhood of the half plane. As in Remark~\ref{rem:R1979}, the Corollary to Theorem 2 in \cite{RauchReed:1980} would imply that a singularity in the initial data at $x=0$ can only spread along the characteristic lines $x=\pm ct$. Clearly, the solution given in Proposition~\ref{eq:NLW1D} neither has the required regularity properties nor does it show the expected singularity propagation.
\begin{remark}
(a) The distribution $u_0(x)$ is homogeneous of degree $-1$. Thus $u(x,t) = u_0(x)$ is a self-similar solution to (\ref{eq:NLW1D}), satisfying $\mu u(\mu x, \mu t) = u(x,t)$ for all $\mu > 0$.

(b) The function $u(x,t)$ from equation (\ref{eq:otherspeeds}) may serve as an example of a non-stationary solution to a nonlinear wave equation which exhibits anomalous propagation of singularities. Indeed,
when $a^2-b^2 = 1$, it solves equation (\ref{eq:NLW1D}) with initial data $u(x,0) = 1/(ax+\ii 0)$, $\p_t u(x,0) = 0$.
The initial singularity propagates along the line $\{(x,t): ax + bct = 0, t\geq 0\}$, which is non-characteristic if $a\neq \pm b$.
\end{remark}

\section{Type II solutions -- Nemytskii operators}
\label{sec:typeII}

This section addresses weak solutions, whereby the nonlinear terms are defined by Nemytskii operators.
We recall the \emph{pseudofunctions} $R_\lambda$, meromorphic functions of $\lambda\in\C$ with values in the space of tempered distributions $\cS'(\R^n)$ \cite[Chapter 17]{Dieudonne:1970}.
For ${\rm Re\,}\lambda > -n$ they are given by
\[
   \langle R_\lambda,\varphi\rangle = \int |x|^\lambda \varphi(x)\dd x
\]
and can be analytically continued to $\C \setminus\{-n-2k:k\in\N\}$. Outside the poles, they satisfy
\[
   \Delta R_\lambda = \lambda(\lambda + n -2)R_{\lambda - 2}.
\]
In particular, when $\lambda > 2-n$ and $p = 1 - 2/{\lambda}$, $R_\lambda$ belongs to $L^p_{\rm loc}(\R^n)$, $(R_\lambda)^p = R_{\lambda p}$ and it satisfies the elliptic equation
\[
   \Delta R_\lambda = \lambda(\lambda+n-2)(R_\lambda)^p,
\]
where the derivatives are understood in the weak sense and the $p$th power as the evaluation of the Nemytskii operator $L^p_{\rm loc}(\R^n) \to L^1_{\rm loc}(\R^n)$.

We note that for $\lambda \in \R \setminus\{-n-2k:k\in\N\}$, $R_\lambda$ is homogeneous of degree $\lambda$, and $R_\lambda \in H^1_{\rm loc}(\R^n)$, if $\lambda > (2-n)/2$.

As examples to be discussed further, we only consider two cases in which $p$ is a positive integer. In the context of propagation of singularities, fractional powers are not interesting for our purpose, because they represent non-smooth nonlinearities.
We use the solutions $R_\lambda$ as examples of peculiar rotationally symmetric stationary solutions to nonlinear wave equations.

\begin{example}\label{ex:p5}
Let $n = 3$ and $\lambda = -1/2$ (then $\lambda(\lambda + n -2) = -1/4$). Let $u_0(x) = |x|^{-1/2}$. Then $u_0\in L^5_{\rm loc}(\R^3)$, and $u(x,t) \equiv u_0(x)$ satisfies the nonlinear wave equation
\begin{equation}\label{eq:n3p5}
  \frac1{c^2}\p_t^2 u - \Delta u - \frac14 u^5 = 0,\quad u(x,0) = u_0(x), \ \p_t u(x,0) = 0
\end{equation}
for whatever $c>0$.
\end{example}
\begin{example}\label{ex:p3}
Let $n = 4$ and $\lambda = -1$ (then $\lambda(\lambda + n -2) = -1$). Let $u_0(x) = |x|^{-1}$. Then $u_0\in L^3_{\rm loc}(\R^4)$, and $u(x,t) \equiv u_0(x)$ satisfies the nonlinear wave equation
\begin{equation}\label{eq:n4p3}
  \frac1{c^2}\p_t^2 u - \Delta u - u^3 = 0,\quad u(x,0) = u_0(x), \ \p_t u(x,0) = 0
\end{equation}
for whatever $c>0$.
\end{example}
In all these cases, derivatives are understood in the weak sense and the powers of $u$ exist as locally integrable functions, actually as evaluations of the continuous map $u\to u^p$ from $L^p_{\rm loc}\to L^1_{\rm loc}$. Note that the nonlinear operation is taken outside the space of distributions, and the result is embedded afterwards.
\begin{remark}
(a) As $u_0$ is nonnegative, we might replace $u^5$ by $|u|^5$ or $|u|^4u$. In any case, we are dealing with so-called focusing nonlinearities.

(b) Recall that $u(x,t)$ is a self-similar solution to the nonlinear wave equation
\begin{equation}\label{eq:ss}
  \frac1{c^2}\p_t^2 u - \Delta u \pm|u|^p = 0,
\end{equation}
if $u(x,t) = \mu^\alpha u(\mu t, \mu x)$ for all $\mu >0$, where necessarily $\alpha = 2/(p-1)$. On the other hand, $u_0 = R_\lambda$ is homogeneous of degree $\lambda$, that is, $u_0(sx) = s^\lambda u_0(x)$ for $s>0$. It also satisfies equation (\ref{eq:ss}) when $\lambda - 2 = \lambda p$, i.e.,
$\lambda = - 2/(p-1)$. Thus the special solutions exhibited here are self-similar solutions to the nonlinear wave equation. However, they do not fall into the classes of functions considered e.g. in
\cite{Bizon:2007,Kato:2007,Pecher:2000a,Pecher:2000,Ribaud:2002}. It should be noted that solutions to nonlinear elliptic equations have also been used in the literature. They can serve for constructing solutions of finite life span, but also for proving the existence of (time-dependent) self-similar solutions \cite{Cote:2018,Donninger:2017,Kenig:2008,Krieger:2009}.
\end{remark}

\section{Type III -- weak asymptotic solutions}
\label{sec:typeIII}
A net of smooth functions $(u_\eps)_{\eps > 0}$ is a called a \emph{weak asymptotic solution} \cite{Danilov:2003} to a nonlinear partial differential equation, such as equation (\ref{eq:ss}), if it has a limit in the space of distributions and if it satisfies the equation
up to an error term which tends to zero weakly as $\eps\to 0$.

The basic example derives again from a nonlinear elliptic equation. Indeed, in $\R^n$, we start from the relation
\[
\Delta (|x|^2 + \eps^2)^{q} = \big(\big(2qn + 4q(q-1)\big)|x|^2 + 2qn\eps^2\big)(|x|^2 + \eps^2)^{q-2}.
\]
We will simply work out two special cases that correspond to the ones in Examples \ref{ex:p5} and \ref{ex:p3}.

\begin{example}\label{ex:p5e}
Let $n = 3$ and $q = -1/4$. By simple arithmetic,
\[
\big(2qn + 4q(q-1)\big)|x|^2 + 2qn\eps^2 = -\frac14(|x|^2 -\eps^2) - \frac54\eps^2
\]
and so
\[
\Delta (|x|^2 + \eps^2)^{-1/4}
    = -\frac14(|x|^2 + \eps^2)^{-5/4} - \frac54\eps^2(|x|^2 + \eps^2)^{-9/4}.
\]
Thus
\[
   u_{\eps}(x,t) = (|x|^2 + \eps^2)^{-1/4}
\]
satisfies the nonlinear wave equation
\begin{equation}\label{eq:n3p5e}
  \frac1{c^2}\p^2_t u_{\eps} - \Delta u_{\eps} -\frac14 u_{\eps}^5 - \frac54\eps^2u_{\eps}^{9} = 0
\end{equation}
for whatever $c>0$. An easy calculation shows that $\eps^2u_{\eps}^{9}$ converges to zero in $\cD'(\R^3)$ as $\eps \to 0$. Thus $u_{\eps}$ is a weak asymptotic solution to the nonlinear wave equation
(\ref{eq:n3p5}) with initial data converging to $u_0(x) = |x|^{-1/2}$. As in Example \ref{ex:p5} we set $u(x,t) = u_0(x)$. By the continuity assertions for Type II solutions,
\[
   u_\eps\to u,\quad u_\eps^5 \to u^5\quad{\rm in}\quad L_{\rm loc}^1(\R^3)\quad{\rm as\ }\eps\to 0,
\]
thus each term in equation (\ref{eq:n3p5e}) converges to the corresponding term in equation (\ref{eq:n3p5}). Further, $u_\eps$ is a smooth approximation to $u$; as $\eps\to 0$, a singularity emerges at $x=0$.
\end{example}
It is of interest to note that the solution to the regularized equation (\ref{eq:n3p5e}) is unique. This emphasizes again the anomaly in the propagation of singularities in the initial value problem (\ref{eq:n3p5}).
\begin{lemma}\label{lem:unique}
Let $n = 1$, $n=2$ or $n=3$. Assume that $u_0\in \cC^1_b(\R^n)$, $u_1\in \cC^0_b(\R^n)$ and let $f$ be smooth. Given any $T>0$, the initial value problem
\begin{equation}\label{eq:NLW}
  \frac1{c^2}\p_t^2 u - \Delta u = f(u),\quad u(x,0) = u_0(x), \ \p_t u(x,0) = u_1(x)
\end{equation}
has at most one weak solution in $\cC^0_b(\R^n\times[0,T])$.
\end{lemma}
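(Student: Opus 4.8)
The plan is to reduce the uniqueness question to a linear problem for the difference of two solutions and to close a Gronwall estimate by means of the explicit Duhamel representation of the wave equation, which in space dimensions $n\le 3$ is governed by a fundamental solution mild enough to permit sup-norm bounds.

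First I would let $u$ and $v$ be two weak solutions in $\cC^0_b(\R^n\times[0,T])$ and put $w=u-v$. Both are bounded, say $|u|,|v|\le M$; since $f$ is smooth it is Lipschitz with some constant $L$ on $[-M,M]$, so the difference $h:=c^2\big(f(u)-f(v)\big)$ lies in $\cC^0_b(\R^n\times[0,T])$ and obeys the pointwise bound $\|h(\cdot,s)\|_\infty\le c^2L\,\|w(\cdot,s)\|_\infty$. Subtracting the two equations, $w$ is a distributional solution of the linear wave equation $\p_t^2 w - c^2\Delta w = h$ on $\R^n\times(0,T)$ with vanishing Cauchy data $w(\cdot,0)=0$ and $\p_t w(\cdot,0)=0$. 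Note that the prescribed initial velocity $u_1$ cancels in the difference, which is essential because $\p_t u$ and $\p_t v$ are individually only weakly defined.

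Next I would invoke the forward fundamental solution $E_+$ of the operator $\p_t^2-c^2\Delta$. The decisive point, and the reason for the restriction $n\le 3$, is that in these dimensions $E_+$ is a nonnegative measure supported on the forward light cone (the normalized surface measure for $n=3$) or a nonnegative locally integrable function (for $n=1,2$), so that convolution with the continuous forcing $h$ reproduces the classical d'Alembert, Poisson and Kirchhoff representations. Extending $w$ and $h$ by zero to $t<0$, and using that the vanishing Cauchy data produce no surface distributions across $t=0$, one identifies $w=E_+\ast h$. Evaluating the three kernels explicitly and integrating the interval, disk and sphere contributions gives, uniformly for $n\in\{1,2,3\}$, the bound
\[
   |w(x,t)|\le \int_0^t (t-s)\,\|h(\cdot,s)\|_\infty\,\dd s .
\]

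Finally, writing $\phi(t)=\|w(\cdot,t)\|_\infty$ and inserting the Lipschitz bound on $h$, I obtain $\phi(t)\le c^2LT\int_0^t\phi(s)\,\dd s$ on $[0,T]$, so Gronwall's inequality forces $\phi\equiv 0$ and hence $u=v$. The main obstacle is not the estimate itself but the justification of the representation $w=E_+\ast h$ for a merely continuous distributional solution: one must verify that attaining zero Cauchy data is precisely the condition under which the extension by zero solves the wave equation on all of $\R^{n+1}$ with no $\delta(t)$ or $\delta'(t)$ contributions. I would handle this by mollifying $w$ in the spatial variables, applying the representation to the smooth approximants, and passing to the limit using the local uniform convergence granted by the continuity of $w$ and $h$. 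In dimensions $n\ge 4$ the kernel carries transverse derivatives on the cone and the sup-norm bound breaks down, which is exactly why the statement is confined to $n\le 3$.
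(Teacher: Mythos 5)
Your proof is correct and takes essentially the same route as the paper's: in both arguments the restriction to $n\le 3$ enters because the fundamental solution of the wave operator is a positive finite measure whose mass grows linearly in $t$, the (difference of) solutions is expressed by the Duhamel representation, and an $L^\infty$ bound combined with the Lipschitz property of $f$ on bounded sets and Gronwall's inequality yields uniqueness. The only differences are cosmetic — the paper convolves with the propagator $S(t)=\cF^{-1}\big(\sin(c|\xi|t)/|\xi|\big)$ and invokes Young's inequality, whereas you convolve the difference $w=u-v$ with the forward space-time fundamental solution and, commendably, make explicit the mollification argument justifying that a merely continuous weak solution satisfies the integral representation, a step the paper leaves implicit.
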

\begin{proof}
Let $S(t)$ be the fundamental solution of the Cauchy problem, that is, $S(t)$ is the inverse Fourier transform of $\sin(c|\xi|t)/c|\xi|$. In space dimensions $n = 1,2,3$, $S(t)$ is a finite measure of total mass $ct$. The solution is given by
\[
   u(.,t) = \frac{d}{d t}S(t)\ast u_0 + S(t)\ast u_1 + \int_0^t S(t-s)\ast f(u(.,s))\dd s.
\]
By Young's inequality, the $L^\infty$-estimate
\[
   \|u(.,t)\|_{L^\infty(\R^n)} \leq C(t)\|u_0,\nabla u_0,u_1\|_{L^\infty(\R^n)}+\int_0^t (t-s)\|f(u(.,s))\|_{L^\infty(\R^n)}\dd s
\]
holds, where C(t) is a constant depending linearly on $t$. Applying this estimate to the difference $u-v$ of two solutions with the same initial data, writing $f(u) - f(v) = (u-v)g(u,v)$ with $g$ smooth and applying Gronwall's inequality shows that $u=v$.
\end{proof}

\begin{example}\label{ex:p3e}
Let $n = 4$ and $q = -1/2$ and let
\[
   u_{\eps}(x,t) = (|x|^2 + \eps^2)^{-1/2}.
\]
By the same arguments as in Example \ref{ex:p5e} one sees that $u_{\eps}$ satisfies the nonlinear wave equation
\begin{equation}\label{eq:n4p3e}
  \frac1{c^2}\p^2_t u_\eps - \Delta u_\eps - u_\eps^3 - 3\eps^2u_\eps^{5} = 0
\end{equation}
for whatever $c>0$. Again, one shows that $\eps^2u_{\eps}^{5}$ converges to zero in $\cD'(\R^4)$ as $\eps \to 0$, and $u_{\eps}$ is a weak asymptotic solution to the nonlinear wave equation (\ref{eq:n4p3}) with initial data converging to $u_0(x) = |x|^{-1}$. With $u(x,t) \equiv u_0(x)$, one has again
\[
   u_\eps\to u,\quad u_\eps^3 \to u^3\quad{\rm in}\quad L_{\rm loc}^1(\R^4)\quad{\rm as\ }\eps\to 0,
\]
thus each term in equation (\ref{eq:n4p3e}) converges to the corresponding term in equation (\ref{eq:n4p3}). The same behavior as in Example \ref{ex:p5e} is observed.
\end{example}
Due to the continuity of the Nemytskii operators, the weak asymptotic solutions constructed here are consistent with the solutions presented in Section \ref{sec:typeII}.

\section{Type IV -- sequential solutions}
\label{sec:typeIV}
In this section, we address solutions defined by nets of smooth functions which do not necessarily converge. To introduce the concept, let $\Omega$ be an open subset of $\R^n$ and let $P$ be a possibly nonlinear partial differential operator which is a smooth function of its arguments, $Pu = P(x,u,\p u,\ldots)$. Let $(u_\eps)_{\eps>0}$ a net of functions belonging to $\Cinf(\Omega)$. If $Pu_\eps = 0$ for all sufficiently small $\eps>0$, then the net $(u_\eps)_{\eps>0}$ is called a \emph{sequential solution} of the equation $Pu = 0$, following e.g. \cite{Rosinger:1980}. The net $(u_\eps)_{\eps>0}$ may or may not converge. Even if $(u_\eps)_{\eps>0}$ converges, individual terms in $P(x,u,\p u,\ldots)$ may or may not converge. However, if $(u_\eps)_{\eps>0}$ converges to a distribution $u$, together with all individual terms in $P(x,u,\p u,\ldots)$, then $u$ can be called a \emph{proper weak solution} to $Pu=0$ \cite{Lindblad:1996}.

Restricting the class of sequential solutions to \emph{moderate nets} allows one to establish a regularity theory for sequential solutions, even if they diverge.
A net of smooth functions $(u_\eps)_{\eps>0}$ on $\Omega$ is called \emph{moderate}, if for all compact subsets $K$ of $\Omega$ and all multi-idices
$\alpha \in \mathbb{N}_0^n$ there exists $b\geq 0$ such that
\[
	\textstyle\sup_{x \in K} |\partial^{\alpha} u_{\varepsilon}(x)| = O(\varepsilon^{-b}) \quad {\rm as}\ \varepsilon \to 0.
\]
The net of smooth functions $(u_\eps)_{\eps>0}$ on $\Omega$ is called \emph{negligible}, if for all compact subsets $K$ of $\Omega$, all multi-indices
$\alpha \in \mathbb{N}_0^n$ and all $a\geq 0$,
\[
	\textstyle\sup_{x \in K} |\partial^{\alpha} u_{\varepsilon}(x)| = O(\varepsilon^a) \quad {\rm as}\ \varepsilon \to 0.
\]
Following \cite{Garetto:2015,Ruzhansky:2017}, a moderate net satisfying $Pu_\eps = 0$ for all sufficiently small $\eps > 0$ is called a \emph{very weak solution} to the equation $Pu=0$.
If $(u_\eps)_{\eps>0}$ is moderate and $Pu_\eps = n_\eps$ where $(n_\eps)_{\eps>0}$ is a negligible net, then $(u_\eps)_{\eps>0}$ is a \emph{Colombeau solution} to the equation $Pu=0$. (As a matter of fact, its equivalence class in the Colombeau algebra $\cG(\Omega)$ is a solution in the differential-algebraic sense \cite{Colombeau:1985,Grosser:2001,MO:1992}.)

Finally, a net $(u_\eps)_{\eps>0}$ is said to possess the $\cG^\infty$-property, if for all compact subsets $K$ of $\Omega$ there is $b\geq 0$ such that for all multi-indices
$\alpha \in \mathbb{N}_0^n$,
\[
	\textstyle\sup_{x \in K} |\partial^{\alpha} u_{\varepsilon}(x)| = O(\varepsilon^{-b}) \quad {\rm as}\ \varepsilon \to 0.
\]
(Note the change in quantifiers: the local order of growth is the same for all derivatives.) The significance of this notion is that it generalizes $\Cinf$-smoothness from distributions to moderate nets. In fact, if $w\in\cE'(\Omega)$ is a compactly supported distribution and $\varphi_\eps$ is a mollifier ($\varphi_\eps(x) = \eps^{-n}\varphi(x/\eps)$ with $\varphi$ smooth, rapidly decaying and $\int\varphi(x)\dd x = 1$), then
\begin{itemize}
\item $w_\eps = w\ast\varphi_\eps|\Omega$ defines a moderate net;
\item $(w_\eps)_{\eps>0}$ has the $\cG^\infty$-property if and only if $w\in\cC^\infty(\Omega)$.
\end{itemize}
The $\cG^\infty$-singular support of a moderate net $(u_\eps)_{\eps>0}$ is defined as the complement of the largest open subset $\omega\subset\Omega$ such that
$(u_\eps|\omega)_{\eps>0}$ has the $\cG^\infty$-property on $\omega$. The same notions can be introduced for nets of smooth functions defined on the closure of an open subset of $\R^n$, thereby enabling the study of initial value problems or boundary value problems.

Replacing $\Cinf$ by $\cG^\infty$, classical regularity theory and propagation of singularities for linear partial differential equations can be literally transferred to the setting of moderate nets in the case of linear equations (with possibly non-smooth coefficients).
Here are some specific results in this direction: $\cG^\infty$-singularities in the linear wave equation propagate along the light cone in any space dimension, \cite{MO:1992}. For wave equations in one space dimension with piecewise constant coefficient, propagation of $\cG^\infty$-singularities occurs along characteristic lines emanating from the initial point singularity, with reflection/diffraction at the points of discontinuity of the coefficient, \cite{Deguchi:2016}. The $\cG^\infty$-wave front set of the kernels of Fourier integral operators can be computed analogously to the classical case, and $\cG^\infty$-singularities in solutions to first order hyperbolic equations propagate along the Hamiltonian flow \cite{Garetto:2014}.

\subsection{Moderate sequential solutions to an advection-reaction equation}
\label{sec:modtrans}
We are going to construct moderate sequential solutions to the advection-reaction equation in one space dimension
\begin{equation}\label{eq:modseqtrans}
  \frac{1}{c}\p_t u + \p_x u + \frac{2}{p}\,x\,u^{p+1} = 0,\quad u(x,0) = u_0(x)
\end{equation}
where -- for simplicity -- $p$ is a positive integer. We first note that for continuous initial data, there is at most one solution.
\begin{lemma}\label{lem:transunique}
Assume that $u_0\in \cC^0_b(\R)$, $c\neq 0$ and let $f$ be smooth. Given any $T>0$, the initial value problem
\begin{equation}\label{eq:NLtrans}
  \frac1{c}\p_t u +\p_x u = f(x,t,u),\quad u(x,0) = u_0(x)
\end{equation}
has at most one weak solution in $\cC^0_b(\R^n\times[0,T])$.
\end{lemma}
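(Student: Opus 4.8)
The plan is to reduce the first-order problem to an integral (Volterra) equation along characteristics and then apply a Gronwall argument, exactly paralleling the proof of Lemma~\ref{lem:unique} for the wave equation. First I would note that the characteristics of the operator $\frac1c\p_t+\p_x$ are the straight lines $x=x_0+ct$, along which the equation becomes the ordinary differential equation $\frac{1}{c}\frac{d}{dt}u(x_0+ct,t)=f(x_0+ct,t,u(x_0+ct,t))$. Integrating in $t$ gives, for any weak solution $u\in\cC^0_b(\R\times[0,T])$, the representation
\[
  u(x,t) = u_0(x-ct) + c\int_0^t f\big(x-c(t-s),s,u(x-c(t-s),s)\big)\,\dd s .
\]
The content of this first step is that a bounded \emph{weak} solution actually satisfies this integral identity; since $u$ is continuous and the right-hand side defines a continuous function, this can be checked by testing against $\cD$ and using that $\frac1c\p_t+\p_x$ annihilates functions constant along characteristics, or equivalently by a change of variables $(x,t)\mapsto(x-ct,t)$ that turns the operator into $\frac1c\p_t$.

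Next I would take two solutions $u,v\in\cC^0_b(\R\times[0,T])$ with the same initial data $u_0$ and subtract their integral representations, so that the $u_0$-terms cancel:
\[
  u(x,t)-v(x,t) = c\int_0^t \big(f(x-c(t-s),s,u)-f(x-c(t-s),s,v)\big)\,\dd s .
\]
Because $f$ is smooth, on the compact range of values taken by $u$ and $v$ (both are bounded on $\R\times[0,T]$) I can write $f(\cdot,\cdot,u)-f(\cdot,\cdot,v)=(u-v)\,g$ with $g$ a bounded function, using the mean value theorem or the identity $g=\int_0^1 \p_u f(\cdot,\cdot,\theta u+(1-\theta)v)\,\dd\theta$. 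Setting $w(t)=\sup_{x\in\R}|u(x,t)-v(x,t)|$, taking the supremum over $x$ and using $|c|$ and the bound on $g$ yields
\[
  w(t) \le L\int_0^t w(s)\,\dd s
\]
for a constant $L$ depending on $|c|$, $T$, and $\sup|\p_u f|$ over the relevant compact set. Gronwall's inequality then forces $w\equiv0$, i.e. $u=v$.

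The main obstacle, as in Lemma~\ref{lem:unique}, is the justification of the first step: that a merely \emph{weak} (distributional) bounded solution satisfies the pointwise integral representation, rather than assuming a classical solution from the outset. This is where continuity and boundedness of $u$ are essential, since they guarantee that $f(x,t,u)$ is itself a bounded continuous function and that the Duhamel formula for the transport operator is valid in the sense of distributions; once the representation is in hand, the Gronwall estimate is routine. I would remark that the argument is genuinely simpler than in the wave-equation case because the fundamental solution of the transport operator is supported on a single characteristic line rather than on a light cone, so no mass estimate on $S(t)$ is needed and the factor $(ct-cs)$ there is replaced by the constant $|c|$; it also explains why the hypothesis $u_0\in\cC^0_b$ (rather than $\cC^1_b$) suffices, as no spatial derivative of the data enters the representation.
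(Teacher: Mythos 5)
Your proof follows essentially the same route as the paper's: reduce a bounded weak solution to the integral equation along the characteristics $x-ct=\mathrm{const}$, then conclude with the Gronwall argument of Lemma~\ref{lem:unique}, writing $f(\cdot,\cdot,u)-f(\cdot,\cdot,v)=(u-v)g$. Your write-up is in fact slightly more careful than the paper's two-line proof: you justify why a merely weak $\cC^0_b$ solution satisfies the pointwise integral identity, and you include the factor $c$ in front of the Duhamel integral, which the paper's displayed formula omits.
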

\begin{proof}
Indeed, if $u$ is a solution, it solves the integral equation
\[
   u(x,t) = u_0(x-ct) + \int_0^t f(x-ct+cs,s,u(x-ct+cs,s))\dd s.
\]
Uniqueness follows by the same argument as in the proof of Lemma \ref{lem:unique}.
\end{proof}
It is immediately checked that, for each $\eps >0$, the smooth function
\begin{equation}\label{eq:ueps}
   u_\eps(x,t) \equiv u_{0\eps}(x) = (x^2 + \eps^2)^{-1/p}
\end{equation}
is a solution to the initial value problem
\begin{equation}\label{eq:modseqtranse}
\frac1{c}\p_t u_\eps + \p_x u_\eps + \frac{2}{p}\,x\,u_\eps^{p+1} = 0, \quad u_\eps(x,0) = (x^2 + \eps^2)^{-1/p}.
\end{equation}
According to Lemma \ref{lem:transunique}, the solution is unique.
It is clear that the net $(u_\eps)_{\eps > 0}$ is moderate, hence it defines a moderate sequential solution to (\ref{eq:modseqtrans}).
\begin{lemma}\label{lemdivergence}
The net $(u_{0\eps})_{\eps > 0}$ converges for $p\geq 3$ and diverges for $p=1,2$. In particular, $(u_{0\eps}^{p+1})_{\eps > 0}$ diverges for every $p>0$.
\end{lemma}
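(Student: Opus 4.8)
The plan is to read off the distributional limits from the pointwise limits together with their local integrability. For each fixed $x\neq 0$ one has $u_{0\eps}(x) = (x^2+\eps^2)^{-1/p}\to |x|^{-2/p}$ as $\eps\to 0$, and moreover $u_{0\eps}(x)\leq |x|^{-2/p}$ throughout. The exponent $2/p$ is strictly less than $1$ precisely when $p\geq 3$, which is exactly the threshold for $|x|^{-2/p}$ to be locally integrable on $\R$. This dichotomy is the engine of the whole statement.

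For $p\geq 3$ I would establish convergence in $L^1_{\rm loc}(\R)$, which implies convergence in $\cD'(\R)$. Fix $\varphi\in\cD(\R)$ with compact support $K$. Since $|x|^{-2/p}\in L^1(K)$ and $|u_{0\eps}(x)\varphi(x)|\leq |x|^{-2/p}|\varphi(x)|$, the dominated convergence theorem yields $\langle u_{0\eps},\varphi\rangle \to \int |x|^{-2/p}\varphi(x)\,\dd x$; hence $u_{0\eps}\to |x|^{-2/p}$ in $\cD'(\R)$.

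For $p=1,2$ the candidate limit $|x|^{-2/p}$ fails to be locally integrable, and I would rule out any limit in $\cD'(\R)$ by producing a single test function on which the scalar pairings are unbounded, since a convergent net must stay bounded against each fixed $\varphi$. Choose $\varphi\geq 0$ smooth and compactly supported with $\varphi(0)>0$, and pick $\delta>0$ with $\varphi\geq \tfrac12\varphi(0)$ on $[-\delta,\delta]$, so that $\langle u_{0\eps},\varphi\rangle \geq \tfrac12\varphi(0)\int_{-\delta}^{\delta}(x^2+\eps^2)^{-1/p}\,\dd x$. The remaining integrals are explicit: for $p=1$ it equals $\tfrac{2}{\eps}\arctan(\delta/\eps)$ and for $p=2$ it equals $2\arsinh(\delta/\eps)$, both tending to $+\infty$. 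The identical mechanism handles $(u_{0\eps}^{p+1})_{\eps>0}$ for every $p\geq 1$: here $u_{0\eps}^{p+1}(x)=(x^2+\eps^2)^{-(p+1)/p}$ has pointwise limit $|x|^{-2(p+1)/p}$ with exponent $2+2/p>1$, never locally integrable, and with the same bump I would bound the pairing below by a multiple of $\int_{-\delta}^{\delta}(x^2+\eps^2)^{-(p+1)/p}\,\dd x$; the substitution $x=\eps y$ rewrites this as $\eps^{\,1-2(p+1)/p}\int_{-\delta/\eps}^{\delta/\eps}(1+y^2)^{-(p+1)/p}\,\dd y$, where the integral converges to a positive constant (as $(p+1)/p>\tfrac12$) while $1-2(p+1)/p=-(p+2)/p<0$ forces the prefactor to blow up.

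The only delicate point is the borderline case $p=2$ in the argument for $(u_{0\eps})$: there the rescaling exponent $1-2/p$ vanishes, so divergence is driven not by a negative power of $\eps$ but by the logarithmic growth of $\arsinh(\delta/\eps)$; this is why I would use the explicit antiderivatives rather than a uniform rescaling estimate for that part. Everything else is routine, the recurring principle being that divergence in $\cD'(\R)$ is most cleanly certified by the unboundedness of the scalar net paired against one well-chosen nonnegative test function.
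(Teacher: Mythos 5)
Your proof is correct and follows essentially the same route as the paper: convergence for $p\geq 3$ via the locally integrable bound $|x|^{-2/p}$, and divergence for $p=1,2$ and for the powers $u_{0\eps}^{p+1}$ by showing that the pairing with a single fixed nonnegative test function concentrated near the origin is unbounded. The only cosmetic difference is that you invoke explicit antiderivatives ($\arctan$, $\arsinh$) for $p=1,2$, whereas the paper uses the substitution $x=\eps y$ throughout, i.e.\ the same rescaling you apply to the powers.
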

\begin{proof}
For $p\geq 3$, $u_0(x) = |x|^{-2/p}$ belongs to the space of locally integrable functions, and $u_{0\eps}(x) = (x^2+\eps^2)^{-1/p}$ converges to it in that space.

Let $p=2$ and take a test function $\varphi \geq 0$ such that $\varphi(x) = 1$ on $[-1,1]$. Then
\begin{eqnarray*}
\langle u_{0\eps},\varphi\rangle = \int_{-\infty}^\infty\frac{\varphi(x)}{\sqrt{x^2+\eps^2}}\dd x
   \geq \int_{-1}^1\frac{1}{\sqrt{x^2+\eps^2}}\dd x = \int_{-1/\eps}^{1/\eps}\frac{1}{\sqrt{y^2+1}}\dd y \to\infty
\end{eqnarray*}
as $\eps\to 0$. A similar argument shows that $(x^2+\eps^2)^{-q}$ diverges for $q > 1/2$. Thus $u_{0\eps}(x) = (x^2+\eps^2)^{-1/p}$ diverges when $p<2$ as well, in particular, for $p=1$. Further, $u_{0\eps}^{p+1}(x) = (x^2+\eps^2)^{-1-1/p}$ diverges for every $p>0$.
\end{proof}
This shows that even in the convergent case $p\geq 2$, the limit $u = \lim_{\eps\to 0}u_\eps$ is not a proper solution of equation (\ref{eq:modseqtrans}).

\subsubsection{The special case $p=2$}

Let us have a more detailed look at the (divergent) case $p=2$. Then the function
\begin{equation}\label{eq:solutrans0.5}
     u_\eps(x,t) = (x^2 + \eps^2)^{-1/2},
\end{equation}
at fixed $\eps > 0$, is a solution to the advection-reaction equation
\begin{equation}\label{eq:modseqtrans0.5}
   \frac{1}{c}\p_t u_\eps + \p_x u_\eps  + x\,u_\eps^{3} = 0,\quad u_\eps(x,0) = (x^2 + \eps^2)^{-1/2}.
\end{equation}
According to Lemma \ref{lem:transunique}, this solution is unique. We may study its $\cG^\infty$-regularity properties.
\begin{proposition}\label{prop:transreg}
The $\cG^\infty$-singular support of $(u_\eps)_{\eps > 0}$ is $\{(0,t): t\geq 0\}$.
\end{proposition}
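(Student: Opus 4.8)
The plan is to establish the two inclusions separately: that the net enjoys the $\Ginf$-property on the open set $\{x\neq 0\}$, and that it fails to have it at every point of the line $\{x=0,\ t\geq 0\}$. Since $u_\eps(x,t)=(x^2+\eps^2)^{-1/2}$ does not depend on $t$, every derivative $\p_t^j\p_x^k u_\eps$ with $j\geq 1$ vanishes identically, so only the pure $x$-derivatives need to be controlled and the estimates are effectively one-dimensional in $x$.

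For the first inclusion I would fix a compact set $K$ contained in $\{x\neq 0\}$, so that $|x|\geq\delta$ on $K$ for some $\delta>0$. There $x^2+\eps^2\geq\delta^2$, and a direct induction shows that $\p_x^k u_\eps$ is a finite sum of terms of the form $c\,x^j(x^2+\eps^2)^{-(2\ell+1)/2}$ with $j\leq\ell\leq k$; each such term is bounded on $K$ by a constant depending only on $\delta$, $k$ and the diameter of $K$, uniformly in $\eps$. (Equivalently, $u_\eps\to|x|^{-1}$ in $\Cinf$ on $\{|x|\geq\delta\}$ with uniform bounds.) Hence every derivative is $O(\eps^0)$ on $K$, so the single exponent $b=0$ serves all multi-indices and the $\Ginf$-property holds on $\{x\neq 0\}$.

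The heart of the matter is the failure at the line $x=0$. Here I would exploit the exact scaling $u_\eps(x,t)=\eps^{-1}g(x/\eps)$ with $g(y)=(1+y^2)^{-1/2}$, which gives $\p_x^k u_\eps(x,t)=\eps^{-1-k}g^{(k)}(x/\eps)$ and in particular $\p_x^k u_\eps(0,t)=\eps^{-1-k}g^{(k)}(0)$. Since $g$ is even with Taylor coefficients $g^{(2m)}(0)/(2m)!=\binom{-1/2}{m}=(-1)^m\binom{2m}{m}4^{-m}\neq 0$, the even-order derivatives at the origin satisfy $|\p_x^{2m}u_\eps(0,t)|=\bigl|(2m)!\binom{-1/2}{m}\bigr|\,\eps^{-1-2m}$. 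Now fix any $t_0\geq 0$ and any neighborhood $\omega$ of $(0,t_0)$; choosing a compact $K\subset\omega$ with $(0,t_0)$ in its interior, $\sup_K|\p_x^{2m}u_\eps|$ is at least this value, of exact order $\eps^{-1-2m}$. The main obstacle, and indeed the whole point, is precisely that these orders $1+2m$ are unbounded in $m$: for any candidate exponent $b$ one can pick $m$ with $1+2m>b$, and then $\p_x^{2m}u_\eps$ is not $O(\eps^{-b})$ on $K$. Thus no single $b$ serves all multi-indices, the $\Ginf$-property fails on every neighborhood of $(0,t_0)$, and $(0,t_0)$ lies in the $\Ginf$-singular support. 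Combining the two inclusions yields $\{(0,t):t\geq 0\}$ as claimed.
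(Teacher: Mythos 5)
Your proof is correct and follows essentially the same route as the paper: uniform bounds on all $x$-derivatives away from $x=0$ via the scaling structure $u_\eps(x,t)=\eps^{-1}g(x/\eps)$, and failure of the $\Ginf$-property along $x=0$ because the even-order derivatives $g^{(2m)}(0)\neq 0$ force the exact growth $\eps^{-1-2m}$, whose exponent is unbounded in $m$. The only (cosmetic) difference is that you verify $g^{(2m)}(0)\neq 0$ directly from the binomial series of $(1+y^2)^{-1/2}$, whereas the paper reads it off the Taylor expansion of $\arsinh$.
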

\begin{proof}
Let $\chi(x) = (x^2+1)^{-1/2}$. Then $u_\eps(x,t) = (x^2 + \eps^2)^{-1/2} = \chi_\eps(x) = \eps^{-1}\chi(x/\eps)$. It is straightforward to show that the $k$th derivative of $\chi$ is of the form
\[
   \chi^{(k)}(x) = P_k(x)(x^2+1)^{-k-1/2}
\]
where $P_k$ is a polynomial of degree $k$. Therefore,
\[
  \chi_\eps^{(k)}(x) = \eps^{-k-1}P_k\big(\frac{x}{\eps}\big)\big(\frac{x^2}{\eps^2}+1\big)^{-k-1/2} = \eps^kP_k\big(\frac{x}{\eps}\big)(x^2+\eps^2)^{-k-1/2}.
\]
When $|x| \geq x_0 > 0$, the latter expression is bounded independently of $\eps > 0$. Thus $(u_\eps)_{\eps > 0}$ has the $\cG^\infty$-property in the region
$\{(x,t):|x| > 0, t\geq 0\}$.

On the other hand, $\chi(x)$ is the derivative of $\arsinh x$, whose Taylor expansion shows that $\chi^{(k)}(x) \neq 0$ when $k$ is an even integer. Thus
\[
\chi_\eps^{(k)}(0) = \eps^{-k-1}\chi^{(k)}(0)
\]
does not have the $\cG^\infty$-property: the line $x=0$ is contained in the $\cG^\infty$-singular support.
\end{proof}
This shows that the moderate sequential solution to (\ref{eq:modseqtrans0.5}) exhibits anomalous propagation of singularities. The initial $\cG^\infty$-singularity at $x=0$ is not propagated along the line $x=ct$ as in the linear case, but rather remains at $x=0$ for all times.
\begin{remark}
Actually, the classical initial value problem $\frac{1}{c}\p_t v + \p_x v  + x\,v^{3} = 0$, $v(x,0) = v_0(x)$ can be solved explicitly. Transformation to characteristic coordinates $s = t$, $y = x-ct$ leads to an ordinary differential equation and to the solution
\[
 v(x,t) = \frac{v_0(x-ct)}{\sqrt{\big(x^2 - (x-ct)^2\big)v_0^2(x-ct) + 1}}.
\]
Inserting $v_0(x) = (x^2 + \eps^2)^{-1/2}$ it turns out that by simple arithmetic, $v(x,t) = (x^2 + \eps^2)^{-1/2}$, supporting the fact that $u_\eps(x,t)$ as given above by (\ref{eq:solutrans0.5}) is indeed the solution. The same phenomenon also happens for $p\neq2$
in (\ref{eq:ueps}) and (\ref{eq:modseqtranse}).
\end{remark}

\subsection{Moderate sequential solutions to a nonlinear wave equation}
\label{sec:modNLW}
Taking a further $x$-derivative, it is seen that $u_\eps(x,t)$ given by (\ref{eq:solutrans0.5}) also solves the one-dimensional nonlinear wave equation
\[
   \frac{1}{c^2}\p_t^2 u_\eps - \p_x^2 u_\eps + u_\eps^3 + 3x^2 u_\eps^5 = 0,\quad u_\eps(x,0) = (x^2 + \eps^2)^{-1/2},\ \p_tu_\eps(x,0) = 0
\]
for every $c>0$. In this case, standard energy estimates can be used to show that the solution is unique.
\begin{lemma}\label{lem:uniqueNLWseq}
Given $v_0\in H^1(\R)$, $v_1\in L^2(\R)$ of finite energy (defined by (\ref{eq:energy}) below), the equation
\begin{equation}\label{eq:NLWseq}
   \frac{1}{c^2}\p_t^2 v - \p_x^2 v + v^3 + 3x^2 v^5 = 0,\quad v(x,0) = v_0(x),\ \p_tv(x,0) = v_1(x)
\end{equation}
has a unique solution $v\in \cC([0,\infty):H^1(\R))\cap \cC^1([0,\infty):L^2(\R))$ of finite energy, where $c>0$.
\end{lemma}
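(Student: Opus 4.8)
The plan is to run a defocusing energy argument, supplemented by the one-dimensional Sobolev embedding and finite propagation speed. First I would identify the conserved energy
\begin{equation}\label{eq:energy}
E(t) = \int_\R\Big(\frac{1}{2c^2}(\p_t v)^2 + \frac12(\p_x v)^2 + \frac14 v^4 + \frac{x^2}{2}\,v^6\Big)\dd x,
\end{equation}
which is precisely the functional whose first variation reproduces the nonlinearity $v^3 + 3x^2v^5$. Multiplying (\ref{eq:NLWseq}) by $\p_t v$ and integrating by parts in $x$ (the boundary terms vanishing for finite-energy fields) shows formally that $\frac{\dd}{\dd t}E(t)=0$. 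The decisive structural feature is that \emph{every} term in (\ref{eq:energy}) is nonnegative, so conservation of $E$ yields, on any interval $[0,T]$, uniform bounds on $\|\p_t v(\cdot,t)\|_{L^2}$, $\|\p_x v(\cdot,t)\|_{L^2}$, $\|v(\cdot,t)\|_{L^4}$ and on the weighted quantity $\|x\,v^3(\cdot,t)\|_{L^2}$. Adding the elementary bound $\frac{\dd}{\dd t}\|v\|_{L^2}^2 = 2\langle v,\p_t v\rangle \leq \|v\|_{L^2}^2 + \|\p_t v\|_{L^2}^2$ and Gronwall's inequality controls $\|v(\cdot,t)\|_{H^1}$, and hence, via $H^1(\R)\hookrightarrow L^\infty(\R)$, also $\|v(\cdot,t)\|_{L^\infty}$, uniformly on $[0,T]$.

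For existence I would construct approximate solutions carrying the same a priori bounds (either by a Galerkin scheme, or by solving the equation with the quintic term regularized and removing the regularization), and then pass to the limit. Weak-$*$ compactness in $L^\infty([0,T]:H^1)$ and $L^\infty([0,T]:L^2)$ for $(v_m,\p_t v_m)$, combined with local strong compactness (Aubin--Lions, using $H^1\hookrightarrow\hookrightarrow\cC_{\rm loc}$ in one dimension), identifies the nonlinear limits $v_m^3\to v^3$ and $x^2v_m^5\to x^2v^5$ in $\cD'$; here the weight $x^2$ is harmless because any test function has compact support, on which $x^2$ is bounded. The uniform bound on $\|x\,v^3\|_{L^2}$ is used separately, through weak lower semicontinuity, to guarantee that the limit itself has finite energy. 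Since the energy bound of Step one is independent of $T$, the solution is global and lies in $\cC([0,\infty):H^1(\R))\cap\cC^1([0,\infty):L^2(\R))$, the time continuity following from the equation together with the established bounds.

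Uniqueness is where the unbounded weight $x^2$ must be handled, and I would do this by finite propagation speed rather than by a global $L^\infty$ argument, since $3x^2v^5$ is not bounded even when $v$ is. Given two finite-energy solutions $v,\tilde v$, their difference $w=v-\tilde v$ solves the linear equation $\frac{1}{c^2}\p_t^2 w - \p_x^2 w + w\,g_1 + 3x^2 w\,g_2 = 0$, where $g_1 = v^2 + v\tilde v + \tilde v^2$ and $g_2$ is the degree-four symmetric polynomial with $v^5-\tilde v^5 = (v-\tilde v)g_2$. On a backward light cone $K(x_0,t_0)=\{(x,t):0\leq t\leq t_0,\ |x-x_0|\leq c(t_0-t)\}$ the spatial section is bounded, so $x^2$, $g_1$ and $g_2$ are all bounded there (the latter two because $v,\tilde v\in L^\infty$ by Step one). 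Forming the local energy $\tilde e(t)$ of $w$ over the truncated cones and using that the characteristic flux has the favorable sign yields a differential inequality $\tilde e'(t)\leq C\,\tilde e(t)$ with $\tilde e(0)=0$, whence $w\equiv0$ on $K(x_0,t_0)$ by Gronwall; as $(x_0,t_0)$ is arbitrary, $w\equiv0$.

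The step I expect to be the main obstacle is precisely the unbounded weight $x^2$ in the quintic term. It blocks the direct Duhamel/$L^\infty$ contraction that sufficed for Lemma~\ref{lem:unique}, and so it forces one both to retain the weighted term $\frac{x^2}{2}v^6$ throughout the a priori estimate and to exploit finite propagation speed in order to confine the uniqueness estimate (and, if desired, the limit passage in the existence proof) to bounded regions where $x^2$ is controlled.
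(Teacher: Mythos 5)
Your proposal is correct and follows essentially the same route as the paper: the paper's proof consists of exhibiting the conserved, everywhere-nonnegative energy $E(t)=\frac12\int\big(|\p_t v|^2+c^2|\p_x v|^2+|v|^4+3x^2|v|^6\big)\dd x$ (the defocusing structure you identify, your functional being the same up to constant factors, and indeed with the correct constants for the equation as written) and then invoking ``standard arguments'' with a reference to Struwe's survey. Your write-up simply supplies those standard details — the Gronwall $L^2$ bound, the one-dimensional embedding $H^1(\R)\hookrightarrow L^\infty(\R)$, Galerkin/compactness existence, and the finite-propagation-speed localization that tames the unbounded weight $x^2$ in the uniqueness step — none of which departs from the energy-method approach the paper relies on.
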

\begin{proof}
It is quite obvious that the energy
\begin{equation}\label{eq:energy}
  E(t) = \frac12\int_{-\infty}^\infty\Big(|\p_t v|^2 + c^2|\p_x v|^2 + |v|^4 + 3x^2|v|^6\Big)\dd x
\end{equation}
is conserved. The proof follows standard arguments (see e.g. \cite{Struwe:1992}).
\end{proof}
At fixed $\eps > 0$, $u_\eps(x,0) = (x^2 + \eps^2)^{-1/2}$ belongs to $H^1(\R)$ and, together with $\p_tu_\eps(x,0) =0$, forms initial data of finite energy.
Thus the stationary solution $u_\eps(x,t) = u_\eps(x,0)$ is the unique solution in this sense. The net $(u_\eps)_{\eps > 0}$ provides a moderate sequential solution to the nonlinear wave equation (\ref{eq:NLWseq}). Its $\cG^\infty$-singular support $\{(x,t), x = 0, t \geq 0\}$ has been computed in Proposition \ref{prop:transreg}.
Again, this differs from the linear case \cite{Garetto:2014} and the nonlinear, classical case (Propositions \ref{prop:RR1981}, \ref{prop:R1979}), according to which the singular support should be $\{(x,t), |x| = ct, t \geq 0\}$.

Anomalous propagation of singularities persists for sequential solutions.
\bigskip

{\bf Acknowledgements:}
I wish to thank the organizers of the INdAM Workshop \emph{Anomalies in Partial Differential Equations} for providing an attractive environment for presenting the results of the paper. Discussions with various participants led to further insight. In particular, I would like to thank Lavi Karp, Sandra Lucente, Alberto Parmeggiani, Michael Reissig, Luigi Rodino and Michael Ruzhansky for helpful remarks. Thanks go to Hideo Deguchi for joint work on the topic since 2016.

\section*{Appendix: On multiplication of distributions}
\addcontentsline{toc}{section}{Appendix: Multiplication of distributions}
\label{Appendix}

Let $S,T\in \cS'(\R^n)$. The $\cS'$-convolution of $S$ and $T$ is said to exist, if
\[
   (\varphi\ast\check{S})T \in \cD_{L^1}'(\R^n),\quad {\rm for\ all}\quad \varphi\in \cS(\R^n),
\]
where $\check{S}(x) = S(-x)$. In this case, the convolution is defined by $\langle S\ast T,\varphi\rangle = \langle (\varphi\ast\check{S})T, 1 \rangle$, and $S\ast T$ belongs to $\cS'(\R^n)$.

Let $u,v\in \cS'(\R^n)$. If the $\cS'$-convolution of $\cF u$ and $\cF v$ exists, one may define the \emph{Fourier product}
\[
   u\cdot v = \cF^{-1}(\cF u \ast \cF v).
\]
The definition can be localized \cite{Ambrose:1980} as follows. Assume that for every $x\in\R^n$ there is a neighborhood $\Omega_x$ and $\chi_x\in\cD(\R^n)$, $\chi_x\equiv 1$ on $\Omega_x$, such that the $\cS'$-convolution of $\cF(\chi_x u)$ and $\cF(\chi_x v)$ exists. Locally near $x$, the product $u\cdot v$ is defined to be
$\cF^{-1}(\cF(\chi_x u) \ast \cF(\chi_x v))$. Globally, it is defined by a partition of unity argument.

A special case arises when the distributions satisfy H\"ormander's wave front set criterion \cite{Hoermander:1971}, requiring that for every
$(x,\xi)\in \R^n\times(\R^{n}\setminus\{0\})$, $(x,\xi)\in\WF(u)$ implies $(x,-\xi)\not\in\WF(v)$.

In space dimension $n=1$, a very convenient case arises when $\supp\cF u$ and $\supp\cF v$ are contained in $[0,\infty)$. (In particular, H\"{o}rmander's criterion is fulfilled.)
The basic example used in Section \ref{sec:typeI} is
\[
   u_0(x) = \frac1{x+\ii 0} = \lim_{\eps\to 0}\frac1{x+\ii \eps} = {\rm vp}\frac1{x} - \ii\pi\delta(x)
\]
whose Fourier transform is $(\cF u_0)(\xi) = -2\pi\ii H(\xi)$. The auto-convolution results in $(\cF u_0\ast\cF u_0)(\xi) = -4\pi^2 \xi H(\xi)$.
Thus $u_0^2 = \cF^{-1}(\cF u_0\ast\cF u_0)$ exists as Fourier product, and the formula shows that $u_0^2(x) = - u_0'(x)$. The remaining formulas used in Section \ref{sec:typeI} follow in the same way.

A more general definition of the product of distributions on $\R^n$ can be obtained by regularization and passage to the limit. The \emph{model product} of $u$ and $v$ is defined as
\[
   [u\cdot v] = \lim_{\eps\to 0}(u\ast\varphi_\eps)(v\ast\varphi_\eps)
\]
provided the limit exists for all mollifiers $\varphi_\eps$ of the form $\varphi_\eps(x) = \eps^{-n}\varphi(x/\eps)$ with $\varphi\in\cD(\R^n)$, $\int\varphi(x)\dd x = 1$, and is independent of the chosen mollifier. If the Fourier product exists, so does the model product.

In the one-dimensional case ($n=1$), a yet more general definition is obtained by using the representation by boundary values of analytic functions, which was discussed in Section \ref{sec:typeI}. Given $u\in\cD'(\R)$, let
\[
   \widetilde{u}_\eps(x) =  \widehat{u}(x+\ii\eps) - \widehat{u}(x-\ii\eps),
\]
with the right-hand side as in (\ref{eq:BV}). It was seen in Section \ref{sec:typeI} that $u(x) = \lim_{\eps\to 0}\widetilde{u}_\eps(x)$. If $u\in\cD'_{L^1}(\R)$, $\widetilde{u}_\eps$ is obtained by convolving $u$ with the special mollifier $\psi_\eps(x) = \eps/(\pi(x^2+\eps^2))$. The {\emph{Tillmann product} \cite{Tillmann:1961} of two distributions $u, v$ is defined by
\[
u\circ v = \lim_{\eps\to 0}\widetilde{u}_\eps\cdot\widetilde{v}_\eps
\]
provided the limit exists. The definition does not work in higher space dimensions; there, harmonic regularization should be used \cite{Boie:1989}. In any case, the powers in
(\ref{eq:u02}) and (\ref{eq:u03}) can also be understood in the sense of the Tillmann product.

H\"{o}rmander's criterion implies the existence of the Fourier product, which implies the existence of the model product and in turn also the existence of the Tillmann product. None of the implications can be reversed.

The other products used in this paper enter at different levels. For example, the most basic product of a smooth function with a distributions enters below H\"{o}rmander's criterion. The product in $H^s_{\rm loc}(\R^n)$ when this space is an algebra ($s > n/2$) enters as a subcase of the Fourier product, but is independent of H\"{o}rmander's criterion. The Nemytskii operators in the form of a continuous map $L^p_{\rm loc}\times L^q_{\rm loc}\to L^1_{\rm loc}$, $1/p + 1/q = 1$, enter at the level of the model product, but are independent of the Fourier product criterion. For more details on these circle of ideas, see \cite{MO:1992}.

\end{document}